\numberwithin{figure}{section} 
\newcommand{\field}[1]{\mathbb{#1}} 
\newcommand{\N}{\field{N}}
\newcommand{\Z}{\field{Z}} 
\newcommand{\C}{\field{C}}
\numberwithin{equation}{section}
\newtheorem{theorem}{\textbf{Theorem}}
\numberwithin{theorem}{section}
\newtheorem{corollary}[theorem]{\textbf{Corollary}}
\newtheorem{lemma}[theorem]{\textbf{Lemma}}
\theoremstyle{definition}
\newtheorem{remark}{Remark}[section]
\newtheorem{example}[theorem]{Example}
 \definecolor{Ftitle}{RGB}{11,46,108}
\definecolor{line}{RGB}{87,39,117}
\colorlet{tableheadcolor}{Ftitle!25} 
\colorlet{tablerowcolor}{gray!10} 
\newcommand{\bea}{\begin{eqnarray}} 
\newcommand{\eea}{\end{eqnarray}} 
\newcommand{\be}{\begin{equation}} 
\newcommand{\ee}{\end{equation}} 
\newcommand{\benn}{\begin{equation*}} 
\newcommand{\eenn}{\end{equation*}}
\definecolor{MyBlue}{HTML}{210cac}
\definecolor{MyCiteColor}{HTML}{0099FF}
\definecolor{MyRed}{HTML}{CC033C}
\title{On the Euler characteristic of a relative hypersurface}
\author{James Fullwood}
\address{School of Mathematical Sciences\\Shanghai Jiao Tong University\\ 800 Dongchuan Road, Shanghai, China}
\email{fullwood@sjtu.edu.cn}
\author{Martin Helmer}
\address{Department of Mathematical Sciences\\University of Copenhagen\\Universitetsparken 5, DK-2100 Copenhagen, Denmark}
\email{m.helmer@math.ku.dk}
\begin{document}

\maketitle

\begin{abstract}
We derive a general formula for the Euler characteristic of a fibration of projective hypersurfaces in terms of invariants of an arbitrary base variety. When the general fiber is an elliptic curve, such formulas have appeared in the physics literature in the context of calculating D-brane charge for M-/F-theory and type-IIB compactifications of string vacua.  While there are various methods for computing Euler characteristics of algebraic varieties, we prove a base-independent pushforward formula which reduces the computation of the Euler characteristic of relative hypersurfaces to simple algebraic manipulations of rational expressions determined by its divisor class in a projective bundle. We illustrate our methods by applying them to an explicit family of relative hypersurfaces whose fibers are of arbitrary dimension and degree.     

\end{abstract}

\section{Introduction}\label{intro}

Viewing algebraic geometry from a relative perspective has played a crucial role in its modern development. From such a viewpoint the fundamental objects are morphisms between varieties, and the static objects of classical algebraic geometry are associated with morphisms to a point. In particular, given an equation defining a classical projective variety, such as
\begin{equation}\label{e1}
y^2z=x^3+fxz^2+gz^3 \quad f,g\in \mathbb{C},
\end{equation}
the relative viewpoint sees the numbers $f$ and $g$ as sections of line bundles over a point, so that we may promote the status of equation (\ref{e1}) to relative algebraic geometry by viewing $f$ and $g$ as sections of line bundles over an arbitrary smooth base variety. As such, we may view equation (\ref{e1}) as an equation for a hypersurface not necessarily in $\mathbb{P}^2$, but in a $\mathbb{P}^2$-\emph{bundle} over a smooth base variety $X$, giving rise to an elliptic fibration $Y_X\to X$, where the fiber over a point $p\in X$ is given by
\[
y^2z=x^3+f(p)xz^2+g(p)z^3.
\] 
As this construction makes no use of any particular features of the base variety $X$, we may view equation $(\ref{e1})$ as a map which takes a smooth variety $X$ to the elliptic fibration $Y_X\to X$. Just as the the fibration $Y_X$ is determined by $X$, certain invariants of $Y_X$ are determined by a combination of invariants of $X$ and invariants of the line bundles for which $f$ and $g$ are sections of. For example, the Euler characteristic of $Y_X$ -- which is the fundamental dimensionless quantity associated with any space -- will always be a certain $\Z$-linear combination of products of Chern classes of $X$ with the first Chern class of a line bundle $\mathscr{L}\to X$. 

The purpose of this note is to compute formulas for the Euler characteristic for relative hypersurfaces such as $Y_X$ solely in terms of invariants of an arbitrary smooth base variety $X$. Such formulas are sometimes called `Sethi-Vafa-Witten' formulas \cite{AE1}\cite{AE2}\cite{EFY}\cite{F}, since if $X$ is a Fano threefold one may construct $Y_X\to X$ such that the total space $Y_X$ is a Calabi-Yau elliptic fourfold for which Sethi, Vafa and Witten -- who were motivated by constructing super-symmetric compactifications of M- and F-theory \cite[Formula (2.12)]{sethi1996constraints} -- derived a formula for the Euler characteristic of $Y_X$ in terms of the Chern classes of $X$, namely
\begin{equation}\label{e2}
\chi(Y_X)=\int_X 12c_1(X)c_2(X)+360c_1(X)^3.
\end{equation}
While the RHS of formula (\ref{e2}) depends on the fact that in this particular case $X$ is of dimension 3, Aluffi and Esole showed one can compute a formula for the Euler characteristic for the relative variety $Y_X$ for all possible $X$ at once \cite[Corollary~4.3]{AE1}, yielding the formula 
\begin{equation}\label{e3}
\chi(Y_X)=\int_X 12c_1(X)\sum_{i=0}^{\text{dim}(X)-1} c_i(X)\left(-6c_1(X)\right)^{\text{dim}(X)-1-i}.
\end{equation}
What we concern ourselves with in this paper is then deriving a genralization of \eqref{e3} for when the fiber is not necessarily an elliptic curve, but a genreral projective hypersurface of arbitrary dimension and degree.

Along with providing a general procedure for obtaining such formulas for the Euler characteristic of relative varieties such as $Y_X$, we show that such Euler characteristic formulas actually contain the information of the \emph{total} Chern class of the relative variety pushed forward to the base. In regards to the elliptic fibration $Y_X$, if we denote the projection of  $Y_X$ to $X$ by $\varphi:Y_X\to X$, we show (as a consequence of Theorem~\ref{rcc})
\begin{equation}\label{e4}
\varphi_*c(Y_X)=\sum_{j=1}^{\text{dim}(X)}\left(12c_1(X)\sum_{i=0}^{j-1} c_i(X)\left(-6c_1(X)\right)^{j-1-i}\right),
\end{equation}
where $\varphi_*c(Y_X)$ denotes the proper pushforward to $X$ of the total Chern class of $Y_X$. The index $j$ in formula (\ref{e4}) then corresponds to the the component of $\varphi_*c(Y_X)$ of codimension $j$ in $X$, so that all components of $\varphi_*c(Y_X)$ are essentially truncations of the Euler characteristic formula (\ref{e3}). Moreover, in keeping with the relative perspective, there is a functorial theory of Chern classes for which $\varphi_*c(Y_X)$ is precisely the Chern class of the \emph{morphism} $\varphi:Y_X\to X$.

Our approach to Chern classes is rooted in Fulton-MacPherson intersection theory, so that the Chern class of a variety is taken to be an element of its Chow group of algebraic cycles modulo rational equivalence. Since from a computational perspective we are primarily concerned with computing pushforwards of Chern classes, the essential tool for our computations is a pushforward formula for rational equivalence classes in a projective bundle which we prove in \S\ref{pf}. Our formula is essentially a generalization to projective bundles of arbitrary rank of a pushforward formula appearing in \cite[Theorem~2.7]{EJY} for classes in a $\mathbb{P}^2$-bundle, and a special case of our pushforward formula also appears in \cite[Theorem~2.5]{F} (for which this paper is a natural continuation of). 

The paper is organized as follows. In \S\ref{rv} we illustrate a general procedure for embedding relative projective varieties in a projective bundle by constructing a relative hypersurface $Z_X(n,d)\to X$ of degree $d$ in a $\mathbb{P}^n$-bundle (for arbitrary integers $d\geq 2$ and $n\geq 1$), for which $Z_X(2,3)$ may serve as an $F$-theory compactification of string vacua which has yet to appear in the literature. In \S\ref{fcc} we review the theory of Chern classes from a functorial perspective and how it lends itself to a natural extension of the Gauss-Bonnet-Chern theorem to the relative setting. The pushforward formula is then proved in \S\ref{pf}. In \S\ref{rccc} we define the Chern class of a morphism, and then use the functorial formalism introduced in \S\ref{fcc} to compute the Chern class of the morphism $Z_X(n,d)\to X$ constructed in \S\ref{rv}. While the aforementioned Chern class computation in \S\ref{rccc} relies on the contribution of singular fibers of $Z_X(n,d)\to X$ and singular Chern classes, in \S\ref{SVW} we apply the pushforward formula proved in \S\ref{pf} to bypass the singularity-theoretic methods used in \S\ref{rccc} and compute the Chern class of the morphism associated with an arbitrary relative hypersurface in a $\mathbb{P}^n$-bundle via simple algebraic manipulations, yielding Sethi-Vafa-Witten formulas analogous to (\ref{e3}).  

For the rest of the paper we work over the complex numbers $\mathbb{C}$, though pretty much everything we do works over an arbitrary algebraically closed field of characteristic zero.

\section{Constructing a relative variety}\label{rv}
As varieties are typically embedded in either affine or projective space, relative varieties are typically embedded in either relative affine or projective spaces, which are vector bundles and projective bundles (respectively) over a smooth base variety. For the sake of compactness we will restrict ourselves to the projective case, so that our relative varieties will be embedded in projective bundles over a smooth compact base variety. One may construct projective bundles either as the space of fiber-wise lines or fiber-wise hyperplanes passing through the zero-section of a vector bundle. In what follows we will choose the former alternative.

Let $X$ be a smooth compact variety over $\mathbb{C}$. For every complex vector bundle $\mathscr{E}\to X$ of rank $n+1$ we let $\mathbb{P}(\mathscr{E})\to X$ denote the projective bundle of \emph{lines} in $\mathscr{E}$, which is a $\mathbb{P}^n$-bundle over $X$. The projective bundle comes with a tautological line bundle corresponding to the fiber-wise lines in $\mathscr{E}$ which we denote by $\mathscr{O}(-1)$. We then denote its dual by $\mathscr{O}(1)$, whose restriction to each $\mathbb{P}^n$ fiber of $\mathbb{P}(\mathscr{E})\to X$ is the line bundle associated with a hyperplane. Given a homogeneous equation in $n+1$ variables which defines a hypersurface in $\mathbb{P}^n$, we may promote its numerical coefficients to sections of line bundles over $X$ in such a way that the homogeneous equation defines a hypersurface $Y_X\hookrightarrow \mathbb{P}(\mathscr{E})$ in a projective bundle $\mathbb{P}(\mathscr{E})\to X$. In such a case we refer to $Y_X$ as a \emph{projecto-relative} hypersurface.      

For the remainder of this section let $d\geq 2$ and $n\geq 1$ be arbitrary integers. We now construct an example of a projecto-relative hypersurface of arbitrary degree in a $\mathbb{P}^n$-bundle $\mathbb{P}(\mathscr{E})\to X$ (the vector bundle $\mathscr{E}$ will be determined by our construction). For this, we use the equation 
\begin{equation}\label{e5}
x_1^d+\cdots+x_n^d+fx_1x_0^{d-1}+gx_0^d=0.
\end{equation}
When $f$ and $g$ are complex numbers, equation (\ref{e5}) defines a smooth hypersurface of degree $d$ in $\mathbb{P}^n$ whenever 
\[
(d-1)^{d-1}f^d+d(-d)^{d-1}g^{d-1}\neq 0,
\]
and a singular hypersurface whenever 
\[
(d-1)^{d-1}f^d+d(-d)^{d-1}g^{d-1}=0. 
\]
A projecto-relative hypersurface associated with equation (\ref{e5}) then enables us to bring together smooth and singular manifestations of equation (\ref{e5}) into a single family. In particular, the homogeneous coordinates $x_0,x_1,\ldots,x_n$ of $\mathbb{P}^n$ will now be homogeneous coordinates in the fibers of a $\mathbb{P}^n$-bundle $\pi:\mathbb{P}(\mathscr{E})\to X$. Each $x_i$ will then be a section of $\mathscr{O}(1)\otimes \pi^*\mathscr{L}_i$, where $\mathscr{L}_i\to X$ is an ample line bundle on $X$ which keeps track of how the coordinate $x_i$ twists from fiber to fiber in $\pi:\mathbb{P}(\mathscr{E})\to X$. And for equation (\ref{e5}) to give a well defined zero-locus of a section of a line bundle on $\mathbb{P}(\mathscr{E})$, each monomial on the LHS of equation (\ref{e5}) must be a section of the same line bundle. Since for example $x_i^d$ is a section of $\mathscr{O}(d)\otimes \pi^*\mathscr{L}_i^d$, we take $\mathscr{L}_i=\mathscr{L}$ for some fixed ample line bundle $\mathscr{L}\to X$ for $i=1,...,n$. Now after taking $x_0$ to be a section of $\mathscr{O}(1)$, and $f$ and $g$ to be sections of $\pi^*\mathscr{L}^{d-1}$ and $\pi^*\mathscr{L}^{d}$ respectively, every monomial of (\ref{e5}) is then a well-defined section of $\mathscr{O}(d)\otimes \pi^*\mathscr{L}^d$. The aforementioned prescriptions are then consistent with $\mathscr{E}\to X$ being given by
\[
\mathscr{E}=\mathscr{O}\oplus \bigoplus_{i=1}^n \mathscr{L}
\]
(so that each $x_i$ corresponds to a summand of $\mathscr{E}$). Equation (\ref{e5}) then defines a projecto-relative hypersurface $Z_X(n,d)\hookrightarrow \mathbb{P}(\mathscr{E})$, and composing the embedding with the bundle projection $\pi:\mathbb{P}(\mathscr{E})\to X$ yields an associated projection morphism $\varphi:Z_X(n,d)\to X$, whose fibers are degree $d$ hypersurfaces in $\mathbb{P}^n$. The singular fibers of $\varphi:Z_X(n,d)\to X$ lie over the (singuar) hypersurface
\[
\Delta_d:((d-1)^{d-1}f^d+d(-d)^{d-1}g^{d-1}=0)\subset X.
\]
To ensure that the total space of the relative variety $Z_X(n,d)$ is smooth we must assume that the hypersurfaces $\{f=0\}$ and $\{g=0\}$ are smooth and intersect transversally. We note that a general fiber of $Z_X(n,d)$ is Fano for $d<n+1$, Calabi-Yau for $d=n+1$, and of general type for $d>n+1$. 

To construct relative varieties defined by more than one equation the procedure is more or less the same. Often times however, one has to use more than a single line bundle to tensor the projective coordinates with. An explicit example of a relative complete intersection was constructed in \cite{EFY}. 

\begin{remark} The fibration $Z_X(2,3)\to X$ is an elliptic fibration given by
\[
Z_X(2,3):(x^3+y^3+fxz^2+gz^3=0) \subset \mathbb{P}(\mathscr{O}\oplus \mathscr{L}\oplus \mathscr{L}),
\]
whose discriminant is given by
\[
\Delta:(4f^3+27g^2=0)\subset X.
\]
If $X$ is Fano and $\mathscr{L}=\mathscr{O}(-K_X)$, $Z_X(2,3)$ has vanishing first Chern class, thus in such a case $Z_X(2,3)$ is in fact Calabi-Yau. As such, $Z_X(2,3)$ then serves as a natural compactification of $F$-theory which may be viewed as a specialization of the $E_6$ fibration defined in \cite{AE2}. However, $Z_X(2,3)$ has a much simpler singular fiber structure than $E_6$ fibrations, as its general singular fiber over $\Delta$ is a cuspidal cubic, which enhances over $\{f=g=0\}\subset \Delta$ to a bouquet of 3 $\mathbb{P}^1$s meeting at a point. Moreover, all smooth fibers of $Z_X(2,3)$ have $j$-invariant 0, which makes $Z_X(2,3)$ a very special case of the $E_6$ family. 
\end{remark}

\section{Functorial Chern classes}\label{fcc}
In the 1970s MacPherson constructed a natural transformation of covariant functors which yields a functorial theory of Chern classes for (possibly singular) varieties \cite{RMCC}. In particular, let $F$ denote the constructible function functor which takes a variety $X$ to its group of constructible functions 
\[
F(X)=\left\{\sum_i \mathbbm{1}_{W_i} \right\}
\] 
(where $\mathbbm{1}_{W_i}$ is the indicator function of a closed subvariety $W_i\subset X$), and let $A_*$ denote the functor which takes a variety to its Chow group of algebraic cycles modulo rational equivalence $A_*X$. MacPherson then constructed a natural transformation\footnote{MacPherson actually used the integral homology functor $H_*$, but everything carries over to Chow homology in a straightforward manner.} 
\[
c_*:F\to A_*,
\]
such that for smooth $X$ we have
\begin{equation}\label{nc}
c_*(\mathbbm{1}_X)=c(TX)\cap [X]\in A_*X.
\end{equation}
For singular $X$, the definition of $c_*(\mathbbm{1}_X)$ is given in terms of a $\Z$-linear combination of Chern-Mather classes of subvarieties of $X$ determined by its singularities, but the precise definition will not be needed for our purposes. 

From here on, given a constructible function $\delta\in F(X)$ we denote $c_*(\delta)$ by $c_{\text{SM}}(\delta)$, and if $\delta=\mathbbm{1}_W$ for some closed subvariety $W$ of $X$ we denote it by $c_{\text{SM}}(W)$, and refer to it as the \emph{Chern-Scwartz-MacPherson class} of $W$ (or simply `CSM class' for short). If $W\subset X$ is smooth, in light of equation (\ref{nc}) we will use the usual notation for Chern classes and denote its CSM class simply by $c(W)$.

Given a proper morphism $f:X\to Y$ and a constructible function $\mathbbm{1}_W$ for some closed subvariety $W\subset X$, its pushforward $f_*\mathbbm{1}_W$ is the  constructible function on $Y$ given by
\begin{equation}\label{pffcf}
f_*\mathbbm{1}_W(p)=\chi(f^{-1}(p)\cap W),
\end{equation}
where $\chi:\text{Var}_{\C}\to \mathbb{Z}$ denotes the map which takes a complex variety to its topological Euler characteristic with compact support. This definition then extends to arbitrary constructible functions by linearity. If $\text{dim}(W)=\text{dim}(f(W))$, the field of rational functions on $f(W)$ is a finite extension of the field of rational functions on $W$, whose degree we denote by $k$. The proper pushforward of the rational equivalence class $[W]\in A_*X$ is then given by
\[
f_*[W]=
\begin{cases}
k[f(W)] \quad \text{if } \text{dim}(W)=\text{dim}(f(W)) \\
\quad 0 \quad \hspace{2.2cm} \text{otherwise}. \\
\end{cases}
\]
This definition also extends to arbitrary rational equivalence classes of algebraic cycles by linearity. We emphasize that the assumption $f$ is proper -- so that $f$ takes closed subvarieties to closed subvarieties -- was essential to both notions of pushforward associated with the covariant functors $F$ and $A_*$. With these prescriptions MacPherson's natural transformation then generalizes the Gauss-Bonnet-Chern Theorem to arbitrary varieties. To see this, first note that if $f:X\to \star$ denotes the map from $X$ to a point, we have
\begin{eqnarray*}
f_*c_{\text{SM}}(X)&=&f_*c_*(\mathbbm{1}_X) \\
                              &=&c_*f_*\mathbbm{1}_X \\
                              &=&c_*\left(\chi(f^{-1}(\star)\cap X)\mathbbm{1}_{\star}\right) \\
                              &=&\chi(X)c_*(\mathbbm{1}_{\star}) \\
                              &=&\chi(X)[\star],
\end{eqnarray*}
so that $c_{\text{SM}}(X)$ encodes the Euler characteristic of $X$ in dimension zero. As such, we denote the pushforward associated with the map $f$ by $\int_X:A_*X\to A_*\star=\mathbb{Z}$, so that we may rewrite the result of the preceding computation as
\begin{equation}\label{gb}
\int_X c_{\text{SM}}(X)=\chi(X).
\end{equation}
It follows that if $Y_X$ is a relative variety with associated morphism $\varphi:Y_X\to X$, the pushforward to a point $Y_X\to \star$ factors through $\varphi$, thus by functoriality and equation (\ref{gb}) we have
\[
\int_{Y_X} c_{\text{SM}}(Y_X)=\int_X\varphi_*c_{\text{SM}}(Y_X)=\chi(Y_X),
\]
a fact which will be key moving forward. In particular, we use the `relative' Gauss-Bonnet-Chern formula
\begin{equation}\label{svwf}
\int_X\varphi_*c_{\text{SM}}(Y_X)=\chi(Y_X)
\end{equation}
to derive a formula for the Euler characteristic of $Y_X$ in terms of Chow classes on $X$, thus we need to effectively compute $\varphi_*c(Y_X)$ (since we will take $Y_X$ to be smooth). A projecto-relative hypersurface $Y_X$ is embedded in a projective bundle, thus we have the following diagram
\[
\xymatrix{
Y_X \ar[dr]_{\varphi}  \ar[r]^{\iota}  & \mathbb{P}(\mathscr{E}) \ar[d]^\pi\\
 & X ,\\
}
\]
where $\iota:Y_X\to \mathbb{P}(\mathscr{E})$ is the inclusion and $\pi:\mathbb{P}(\mathscr{E})\to X$ is the bundle projection. As such, we have 
\[
\varphi_*c(Y_X)=\pi_*(\iota_*c(Y_X)),
\]
so what we need to effectively compute is $\pi_*$, which is the focus of the next section.
 
\section{A pushforward formula}\label{pf}
Let $X$ be a smooth compact variety over $\C$ endowed with a vector bundle $\mathscr{E}\to X$ of rank $r>0$, and let $\pi:\mathbb{P}(\mathscr{E})\to X$ denote the projective bundle of \emph{lines} in $\mathscr{E}$. In this section we prove a formula for the proper pushforward map $\pi_*:A_*\mathbb{P}(\mathscr{E})\to A_*X$ associated with with the bundle projection $\pi:\mathbb{P}(\mathscr{E})\to X$. Such a map will enable us to effectively compute $\pi_*$ in a way that is independent of the dimension of $X$, and is relative in the sense that it only depends on the rank of $\mathscr{E}\to X$. We denote the dual of tautological line bundle $\mathscr{O}(-1)\to \mathbb{P}(\mathscr{E})$ by $\mathscr{O}(1)\to \mathbb{P}(\mathscr{E})$, and its associated Chow class in $A_*\mathbb{P}(\mathscr{E})$ we denote by $H$. Crucial to our computations will be the well-known formula of Grothendieck for the Chow group of a projective bundle, namely  
\begin{equation}\label{pbf}
A_*\mathbb{P}(\mathscr{E})\cong A_*X[H]/\left(H^{r}+c_1(\mathscr{E})H^{r-1}+\cdots+c_r(\mathscr{E})\right),
\end{equation}
so that any class in $A_*\mathbb{P}(\mathscr{E})$ may be written as a polynomial in $H$ with coefficients in $A_*X$ (we note that since $X$ is smooth the Chow groups $A_*\mathbb{P}(\mathscr{E})$ and $A_*X$ are in fact rings). As such, if we let $H^0=[\mathbb{P}(\mathscr{E})]$, and $\alpha \in A_*\mathbb{P}(\mathscr{E})$ is expanded as
\begin{equation}\label{e91}
\alpha=\alpha_0H^0+\alpha_1 H+\alpha_2 H^2+\cdots ,
\end{equation}
then by the projection formula we have
\[
\pi_*\alpha=\alpha_0\pi_*H^0+\alpha_1\pi_*H+\alpha_2\pi_*H^2+\cdots ,
\]
thus $\pi_*$ is determined by how it acts on powers of $H$. As we work over a base $X$ of arbitrary dimension, what our formula then accomplishes is pushing forward \emph{all} powers of $H$ at once, thus allowing one to compute $\pi_*$ in a relative setting.

We recall that the \emph{Chern roots} of $\mathscr{E}$, denoted $L_1,...,L_r$, are the formal roots of the Chern class of $\mathscr{E}$, so that
\[
c(\mathscr{E})=(1+L_1)\cdots (1+L_r)\in A_*X.
\] 
We refer to the non-zero Chern roots of $\mathscr{E}$ as the \emph{non-trivial Chern roots} of $\mathscr{E}$, and since $\mathbb{P}(\mathscr{E})$ is isomorphic to $\mathbb{P}(\mathscr{E}\otimes \mathscr{L})$ for any line bundle $\mathscr{L}\to X$, we may assume without any loss of generality that at least one of the Chern roots of $\mathscr{E}$ is zero. We remark that if $\mathscr{E}$ is in fact the trivial bundle, i.e., if all the Chern roots of $\mathscr{E}$ are zero, then by \cite{IT} \S3.2 it follows 
\[
\pi_*H^i=\begin{cases}
1 \quad \text{if } i=r-1 \\
0 \quad \text{otherwise},
\end{cases}
\]
thus we assume from here on that $\mathscr{E}$ is not the trivial bundle 
of rank $r$, as our formula will be given in terms of the non-trivial Chern roots of $\mathscr{E}$. Alternative notational conventions would allow one to incorporate the case of the trivial bundle into the theorem below (see Remark \ref{remark:OtherNotationalConvention}), however we have opted to state this case separately and to assume the bundle $\mathscr{E}$ is non-trivial.
\begin{theorem}\label{pff}
Let $L_1,\ldots ,L_m$ be the distinct non-trivial Chern roots of $\mathscr{E}$, $k_i\in \N$ be such that the multiplicity of $L_i$ is $k_i+1$, and let $\alpha\in A_*\mathbb{P}(\mathscr{E})$ be given by
\[
\alpha=\alpha_0+\alpha_1H+\alpha_2H^2+\cdots.
\]
The map $\pi_*:A_*\mathbb{P}(\mathscr{E})\to A_*X$ is then given by 
\begin{equation}\label{mf1}
\pi_*\alpha=\left.\left(D\cdot \sum_{i=1}^m g_{\alpha}(x_i)\right)\right|_{
x_1=-L_1,\dots,x_m=-L_m}\in A_*X,
\end{equation}
where 
\[
g_{\alpha}(x_i)=\frac{\alpha(x_i) -\left(\alpha_0+\alpha_1 x_i+\cdots + \alpha_{\emph{rank}(\mathscr{E})-2}x_i^{\emph{\text{rank}}(\mathscr{E})-2} \right)}{ x_i^{\emph{rank}(\mathscr{E})-m}\prod_{l=1, \; l \neq i}^{m} (x_i-x_l)},
\]
and $D$ is the linear operator on $A_*X[x_1,\ldots,x_m]$ given by
\[
g\mapsto \frac{1}{k_1!\cdots k_m!}\frac{\partial^{k_1+\cdots +k_m}}{\partial x_1^{k_1}\cdots \partial x_m^{k_m}}(x_1^{k_1}\cdots x_m^{k_m}\cdot g).
\]
\end{theorem}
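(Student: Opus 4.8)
The plan is to reduce the computation of $\pi_*$ to its action on the powers $H^j$ and then recognize the outcome as a classical symmetric-function identity. Write $r=\mathrm{rank}(\mathscr{E})$ and let $a_1,\dots,a_r$ be the Chern roots, so that the Grothendieck relation (\ref{pbf}) reads $\prod_{j=1}^r(H+a_j)=0$. Since $\pi_*$ is $A_*X$-linear and $\alpha=\sum_j\alpha_j H^j$, it suffices to compute $\pi_*(H^j)$. By the definition of Segre classes for the bundle of lines one has $\pi_*(H^j)=s_{j-r+1}(\mathscr{E})$, with $s_k=0$ for $k<0$ and $s_0=1$ (one checks this directly by applying $\pi_*$ to the relation, using $\pi_*(H^{r-1})=1$ and $\pi_*(H^{<r-1})=0$); and from $s(\mathscr{E})=c(\mathscr{E})^{-1}=\prod_j(1+a_j)^{-1}$ we get $s_k=h_k(-a_1,\dots,-a_r)$, the complete homogeneous symmetric polynomial in the negated roots. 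Because one Chern root may be taken to vanish and adjoining zero arguments leaves $h_k$ unchanged, this equals $h_{j-r+1}$ evaluated on the negated \emph{non-trivial} roots, each $-L_i$ repeated $k_i+1$ times.

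Next I would treat the case of distinct non-trivial roots (all $k_i=0$, so $m$ is the number of such roots and $D$ is the identity). Here I invoke the divided-difference identity $h_k(y_1,\dots,y_m)=\sum_{i=1}^m y_i^{\,m-1+k}\big/\prod_{l\neq i}(y_i-y_l)$, valid for $k\geq 0$. Substituting $y_i=-L_i$ and $k=j-r+1$ into $\pi_*\alpha=\sum_j\alpha_j\,s_{j-r+1}$, interchanging the two summations, and factoring out $y_i^{\,m-r}$ leaves the inner sum $\sum_{j\geq r-1}\alpha_j y_i^{\,j}=\alpha(y_i)-\sum_{s=0}^{r-2}\alpha_s y_i^{\,s}$. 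This is exactly the numerator of $g_\alpha$, while $y_i^{\,m-r}$ supplies the denominator power $x_i^{\,r-m}$; thus the truncation of the low-degree terms and the factor $x_i^{\,r-m}$ are precisely what package the sum over $j$ and, equivalently, absorb the contribution of the vanishing roots. This establishes (\ref{mf1}) when the non-trivial roots are distinct.

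For arbitrary multiplicities I would argue by confluence. Set $N=\sum_{i=1}^m(k_i+1)$, the number of non-trivial roots counted with multiplicity. The identity just proved is a universal polynomial identity in auxiliary indeterminates $y_1,\dots,y_N$ standing for these roots; letting each cluster of $k_i+1$ of the $y$'s coalesce to the single value $-L_i$ specializes the identity, so it continues to hold after substitution in $A_*X$. The limit of the simple-pole terms over a coalescing cluster is governed by the Hermite--Taylor rule, which replaces the cluster by a derivative of order $k_i$ divided by $k_i!$; the residual powers $y_i^{\,N-r}$ together with the $\prod x_i^{k_i}$ weights generated by the coalescence are what shift the exponent from $N$ back to $m$ (note $r-m=m_0+\sum_i k_i$) and reorganize the derivatives into the weighted operator $g\mapsto\frac{1}{k_i!}\partial_{x_i}^{k_i}\!\big(x_i^{k_i}g\big)$ acting in the $i$-th variable, whose product over $i$ is $D$. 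The main obstacle is exactly this bookkeeping: verifying that the confluent limit reassembles into the specific operator $D$ — in particular accounting for the $x_i^{k_i}$ weighting — rather than some other arrangement of derivatives and products. Once the form of $D$ is confirmed on the generating classes $h_{j-r+1}$ (for instance by testing it against $h_k$ with repeated arguments, as in the rank-$3$ case with a doubled root), linearity in $\alpha$ completes the proof, and the whole argument is valid over $A_*X$ because it is a polynomial identity among symmetric functions of the Chern roots.
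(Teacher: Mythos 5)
Your reduction to $\pi_*(H^j)$, the identification $\pi_*(H^j)=s_{j-r+1}(\mathscr{E})=h_{j-r+1}$ of the negated Chern roots, and the treatment of the distinct-root case via the divided-difference identity $h_k(y_1,\dots,y_m)=\sum_i y_i^{m-1+k}/\prod_{l\neq i}(y_i-y_l)$ are all correct, and in that regime your argument is essentially the paper's: that identity is exactly the paper's equation (\ref{spi}), and your bookkeeping of the low-order truncation and the factor $x_i^{\mathrm{rank}(\mathscr{E})-m}$ (absorbing the trivial roots) matches the paper's $g_\alpha$.

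The gap is in the case of repeated roots, which is where the operator $D$ — the actual content of the theorem beyond the distinct case — enters. You propose a confluence argument and then state that "the main obstacle is exactly this bookkeeping: verifying that the confluent limit reassembles into the specific operator $D$," offering only to "confirm" the form of $D$ by testing it on low-rank examples. That verification is not a detail to be deferred: the assertion that the coalescence of a cluster of $k_i+1$ roots produces precisely $\tfrac{1}{k_i!}\partial_{x_i}^{k_i}(x_i^{k_i}\cdot{}-{})$, rather than some other arrangement of derivatives, \emph{is} the statement being proved, and testing it on a rank-$3$ example with a doubled root does not establish it in general. There is also a technical wrinkle you elide: the individual terms of the partial-fraction sum have poles as the $y$'s coalesce, and $A_*X$ carries no topology in which to take limits, so "the Hermite--Taylor rule" must be replaced by an algebraic identity among polynomials before any specialization is legitimate. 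The paper avoids all of this by never taking a limit: it writes $(1+L_i)^{-(k_i+1)}$ as $\tfrac{1}{k_i!}\tfrac{d^{k_i}}{dx_i^{k_i}}(1-x_i)^{-1}\big|_{x_i=-L_i}$ at the outset, so that the coefficient of each monomial in $c(\mathscr{E})^{-1}$ is $\prod_l\binom{k_l+i_l}{i_l}(-L_l)^{i_l}=\prod_l\tfrac{1}{k_l!}\tfrac{d^{k_l}}{dx_l^{k_l}}(x_l^{i_l+k_l})\big|_{x_l=-L_l}$, and then applies the resulting operator $D$ directly to the polynomial identity (\ref{spi}) in distinct formal variables. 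If you want to salvage your route, you should prove the identity
\[
\sum_{i_1+\cdots+i_m=j}\prod_{l=1}^{m}\binom{k_l+i_l}{i_l}x_l^{i_l}
=\frac{1}{k_1!\cdots k_m!}\frac{\partial^{k_1+\cdots+k_m}}{\partial x_1^{k_1}\cdots\partial x_m^{k_m}}\left(x_1^{k_1}\cdots x_m^{k_m}\sum_{i=1}^{m}\frac{x_i^{m+j-1}}{\prod_{l\neq i}(x_i-x_l)}\right)
\]
as a statement about polynomials; as written, your proof asserts it without argument.
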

\begin{proof}
By the projection formula for intersection products we have
\[
\pi_*\alpha =\sum_{j=0} \alpha_j\cdot \pi_*(H^j),
\]
thus $\pi_*\alpha$ is completely determined by the $\pi_*(H^j)$. For this, we use the fact that by Fulton's definition of Chern class (\cite{IT}, \S3.2) we have
\begin{equation}\label{scf}
\pi_*\left(1+H+H^2+\cdots\right)=\frac{1}{c(\mathscr{E})}=\frac{1}{(1+L_1)^{k_1+1}\cdots (1+L_m)^{k_m+1}}\in A_*X,
\end{equation}
where we recall that the $L_i$ are the \emph{distinct} non-trivial Chern roots of $\mathscr{E}$. As the proper pushforward of a pure dimensional cycle (not in the kernel of $\pi_*$) preserves its dimension, it follows that $\pi_*(H^j)$ coincides with the term of dimension $\text{dim}(\mathbb{P}(\mathscr{E}))-j$ in the formal series expansion of $\left((1+L_1)^{k_1+1}\cdots (1+L_m)^{k_m+1}\right)^{-1}$ (so that $\pi_*(H^j)=0$ for $j<\text{rank}(\mathscr{E})-1$). Now since
\[
\frac{1}{(1+L_i)^{k_i+1}}=\frac{1}{k_i!}\frac{d^{k_i}}{dx_i^{k_i}}\left.\left(\frac{1}{1-x_i}\right)\right|_{x_i=-L_i}= \left.\left(\sum_{j=0} \frac{(k_i+j)!}{k_i!j!}x_i^j\right)\right|_{x_i=-L_i},
\]
the term of codimension $d$ in $X$ of $c(\mathscr{E})^{-1}$ is precisely
\[
\sum_{i_1+\cdots+i_m=d} \beta_{i_1}^{(k_1)}(-L_1)^{i_1}\cdot \beta_{i_2}^{(k_2)}(-L_2)^{i_2} \cdots \beta_{i_m}^{(k_m)}(-L_m)^{i_m}, 
\]
where
\[
\beta^{(k_l)}_{i_l}=\frac{(k_l+i_l)!}{k_l!i_l!}={k_l+i_l \choose k_l}. 
\]
Now let $n=\text{rank}(\mathscr{E})-1$. It then follows that $\text{dim}(H^n)=\text{dim}(X)$, so that the term $\pi_*(H^{n+j})$ is of codimension $j$ in $X$. We then have 
\begin{equation}
\pi_*(H^{n+j})= \left\lbrace \begin{array}{l}
0 \;\;\; \hspace{9.29cm} \mathrm{for} \;\;\; j<0 \\
\displaystyle \sum_{i_1+\cdots+i_m=j} \beta_{i_1}^{(k_1)}(-L_1)^{i_1}\cdot \beta_{i_2}^{(k_2)}(-L_2)^{i_2} \cdots \beta_{i_m}^{(k_m)}(-L_m)^{i_m} \;\;\; \mathrm{for} \;\;\; j \geq 0.
\end{array} \right. \label{eq:piH12}
\end{equation}
Thus 
\begin{align*}
\pi_*\alpha=&\; \pi_* \left( \sum_{j=-n} \alpha_{n+j}H^{n+j} \right) \\
=&\sum_{j=-n} \alpha_{n+j} \cdot  \pi_*\left(H^{n+j} \right) \;\;\; (\mathrm{By \; the \; projection \; formula}) \\
\overset{\eqref{eq:piH12}}=& \sum_{j=0} \alpha_{n+j} \left( \sum_{i_1+\cdots+i_m=j} \beta_{i_1}^{(k_1)}(-L_1)^{i_1}\cdot \beta_{i_2}^{(k_2)}(-L_2)^{i_2} \cdots \beta_{i_m}^{(k_m)}(-L_m)^{i_m} \right)\;\;\; \\
=& \sum_{j=0} \alpha_{n+j} \left(\sum_{i_1+\cdots+i_m=j} \frac{1}{k_1!} \frac{d^{k_1}}{dx_1^{k_1}} \left( x_1^{i_1+k_1}\right) \bigg|_{x_1=-L_1} \cdots \frac{1}{k_m!} \frac{d^{k_m}}{dx_m^{k_m}} \left( x_m^{i_m+k_m}\right) \bigg|_{x_m=-L_m}  \right) \;\;\;  \\
=& \sum_{j=0} \alpha_{n+j} \left. \left( \frac{1}{k_1!\cdots k_m!} \frac{\partial^{k_1+\cdots +k_m}}{\partial x_1^{k_1}\cdots \partial x_m^{k_m}}  \sum_{i_1+\cdots+i_m=j}  x_1^{i_1+k_1}\cdots x_m^{i_m+k_m}  \right)\right|_{
x_1=-L_1, \dots,x_m=-L_m }  \\
=&\sum_{j=0} \alpha_{n+j} \left. \left( \frac{1}{k_1!\cdots k_m!} \frac{\partial^{k_1+\cdots +k_m}}{\partial x_1^{k_1}\cdots \partial x_m^{k_m}}  \left(x_1^{k_1} \cdots x_m^{k_m}\sum_{i_1+\cdots+i_m=j}  x_1^{i_1}\cdots x_m^{i_m}\right)  \right)\right|_{
x_1=-L_1, \dots,x_m=-L_m }  \\
\overset{(\ref{spi})}=&\sum_{j=0} \alpha_{n+j} \left. \left( D\cdot \sum_{i=1}^m \frac{x_i^{m+j-1}}{\prod_{l=1, \; l \neq i}^{m} (x_i-x_l)}  \right)\right|_{x_1=-L_1, \dots,x_m=-L_m } \;\;\; \\
=&\left.\left(D\cdot \sum_{i=1}^m \frac{\sum_{j=0} \alpha_{n+j} x_i^{m+j-1}}{\prod_{l=1, \; l \neq i}^{m} (x_i-x_l)}  \right)\right|_{
x_1=-L_1, \dots,x_m=-L_m } \\
=&\left.\left(D\cdot \sum_{i=1}^m \frac{\alpha(x_i) -\left(\alpha_0+\alpha_1 x_i+\cdots + \alpha_{\text{rank}(\mathscr{E})-2}x_i^{\text{rank}(\mathscr{E})-2} \right)}{ x_i^{\text{rank}(\mathscr{E})-m}\prod_{l=1, \; l \neq i}^{m} (x_i-x_l)}\right)\right|_{
x_{1}=-L_{1},...,x_m=-L_m}.
\end{align*}
 
The conclusion follows from the last expression; note that in the seventh equality we used the  following fact (see \cite[Theorem~3.2]{CI}) 
\begin{equation}\label{spi}
 \sum_{i_1+\cdots+i_m=j}  x_1^{i_1}x_2^{i_2} \cdots x_m^{i_m} =\sum_{i=1}^m \frac{x_i^{m+j-1}}{\prod_{l=1, \; l \neq i}^{m} (x_i-x_l)}.
\end{equation}
\end{proof}

Clearly formula (\ref{mf1}) is symmetric in the distinct Chern roots of $\mathscr{E}$, so that the end result may be expressed in terms of the Chern classes $c_i(\mathscr{E})$ and the $\alpha_j$ for $j\geq n$. As we view the operation $\pi_*$ as a relative form of integration, it is interesting to note that the appearance of the differential operator on the right-hand side of formula (\ref{mf1}) gives it the flavor of a `Stokes-type formula'. It follows from formula (\ref{spi}) (along with degree considerations) that terms of the form  $L_i^{\text{rank}(\mathscr{E})-m}\prod_{l=1, \; l \neq i}^{m} (L_i-L_l)$ appearing in final expression for $\pi_*\alpha$ in fact cancel once put over a common denominator. As such, in practice one may in fact avoid the use of the differential operator $D$ by first assuming \emph{all} the non-trivial Chern roots are  distinct, and then evaluating the final simplified rational expression at the possibly non-distinct total number of non-trivial Chern roots of $\mathscr{E}$.  
\begin{remark}
Similar to the discussion above, if one changes the notational convention so that the last (or first) Chern root is assumed to be zero and makes no assumptions on the remainder of the roots, then the $L_i$'s in Theorem~\ref{pff} can be treated as formal symbols and zero values can be substituted in after clearing denominators in the pushforward resulting from the formula in Theorem~\ref{pff}. 
In particular, suppose we have a vector bundle $\mathscr{E}\to X$ of rank $r$ with Chern roots $\alpha_1, \dots, \alpha_{r}$. If, as above, we choose $\alpha_{r}$ and set $\mathscr{L}\to X$ to be a line bundle whose first Chern class is $-\alpha_{r}$, then after tensoring $\mathscr{E}$ with $\mathscr{L}$, we get a vector bundle (which can still be called $\mathscr{E}$, as this yields an isomorphic projectivization $\mathbb{P}(\mathscr{E})$) with Chern roots $L_1,\dots, L_{r-1}$
and $L_{r} = 0$. We could then \textit{define} $L_1,\dots, L_{r-1}$ as non-trivial Chern roots (regardless of their values, and allowing them to be zero). With this convention we would apply the formula of Theorem~\ref{pff} treating the Chern roots  $L_1,\dots, L_{r-1}$ as formal symbols and evaluating them after clearing denominators.  
In this way one could avoid assuming the Chern roots in Theorem~\ref{pff} are non-zero. However, since only the non-zero Chern roots determine the pushforward (for non-trivial bundles), we have opted for the notational convention which assumes the $L_i$'s in Theorem~\ref{pff} are non-zero.
\label{remark:OtherNotationalConvention}\end{remark}
For projective bundles of small rank, formula (\ref{mf1}) is reasonable to apply by hand, though in any case a computer implementation is straightforward and is of negligible cost. We note that such an implementation may be of utility for software packages computing characteristic classes of varieties. In particular an implementation of (\ref{mf1}) allows the applicability setting of algorithms for computing singular Chern classes and Euler characteristics of (possibly singular) projective varieties and schemes (such as \cite{jost2013algorithm, Helmer2015}) to be broadened to the setting of projective bundles. To this end, and to simplify computations for the interested reader, we provide a test implementation in the form of a Macaulay2 \cite{M2} package which can be downloaded from \url{https://github.com/Martin-Helmer/PBF}. 

We note that when $\mathscr{E}$ is the direct sum of tensor powers of a fixed line bundle on $X$ we recover the main formula derived in \cite[Theorem~1.1]{F}, and when $\mathscr{E}$ is the direct sum of three distinct line bundles we recover a formula appearing in \cite[\S2.7]{EJY}.

\section{Relative Chern class}\label{rccc}
We recall that in \S\ref{fcc} we saw that if $\varphi:Y_X\to X$ is a relative variety, then 
\begin{equation}\label{rgby}
\chi(Y_X)=\int_X \varphi_*c(Y_X),
\end{equation}
so that explicitly computing the RHS of equation (\ref{rgby}) yields a `Sethi-Vafa-Witten formula' for the Euler characteristic of $Y_X$ in terms of a Chow class on $X$. In this section, we discuss the precise sense in which $\varphi_*c(Y_X)$ is the Chern class of the \emph{morphism} $\varphi:Y_X\to X$, and then use the functorial formalism of \S\ref{fcc} to derive a formula for $\varphi_*c(Y_X)$ when $Y_X$ is the projecto-relative hypersurface $Z_X(n,d)$ defined in \S\ref{rv}. An explicit formula for $Y_X$ an arbitrary projecto-relative hypersurface is then derived in section \S\ref{SVW}.

Let $V$ be a variety and denote by $K_0(\text{Var}_{V})$ the free $\mathbb{Z}$-module generated by isomorphism classes of proper morphisms to $V$, modulo the relation 
\[
[Y\overset{\psi}\to V]=[Z\overset{\left.\psi\right|_{Z}}\longrightarrow V]+[U\overset{\left.\psi\right|_{U}}\longrightarrow V],
\]
whenever $Z$ is a closed subvariety of $Y$ with open complement $U$. We may endow $K_0(\text{Var}_{V})$ with a ring structure by setting
\[
[Y\overset{\psi}\to V]\cdot [W\overset{\vartheta}\to V]=[Y\times_{V}W\to V],
\]
where $Y\times_{V}W\to V$ denotes the fibered product of $Y$ with $W$ over $V$ (with respect to the morphisms $\psi$ and $\vartheta$). We refer to the ring $K_0(\text{Var}_{V})$ as the \emph{relative Grothendieck ring of varieties} (over $V$). When $V=\star$ is a point, the relative Grothendieck ring is often denoted $K_0(\text{Var}_{\C})$, and referred to simply as the \emph{Grothendieck ring of varieties}. Now given a proper morphism $V\overset{f}\to S$, we define an associated homomorphism
\[
K_0(f):K_0(\text{Var}_{V})\longrightarrow K_0(\text{Var}_{S})
\]
given by
\[
[Y\to V]\mapsto [Y\to V\to S].
\]
With such prescriptions $K_0(\text{Var}_{(\cdot)})$ is then a covariant functor from varieties to groups with respect to proper morphisms, which we refer to as the \emph{relative Grothendieck functor}. Moreover, there exists a natural transformation of functors
\[
\nu:K_0(\text{Var}_{(\cdot)})\to F,
\]
given by
\[
\nu([Y\overset{\psi}\to V])=\psi_*\mathbbm{1}_Y\in F(V),
\]
where we recall that $\psi_*\mathbbm{1}_Y$ is the pushforward to $V$ of the constructible function $\mathbbm{1}_Y$ (as prescribed by equation (\ref{pffcf})). Then composing $\nu:K_0(\text{Var}_{(\cdot)})\to F$ with the MacPherson natural transformation $c_*:F\to A_*$ yields
\begin{equation}\label{e9}
c_*(\psi_*\mathbbm{1}_Y)=\psi_*(c_*(\mathbbm{1}_Y))=\psi_*c_{\text{SM}}(Y)\in A_*V.
\end{equation}
As such, after denoting the the composition $c_*\circ \nu$ simply by $c$, we have 
\[
c([Y\overset{\psi}\to V])=\psi_*c_{\text{SM}}(Y),
\]
which we refer to as the \emph{relative Chern class} of the morphism $\psi:Y\to V$ (though we should really say relative Chern class of the \emph{class} of the morphism $\psi:Y\to V$ in the relative Grothendieck ring), which we will often denote by $c(\psi)$. In such a framework, the Chern-Schwartz-MacPherson class of a variety $V$ is the relative Chern class of the identity morphism $\text{id}_V:V\to V$, i.e.,
\[
c_{\text{SM}}(V)=c(\text{id}_V)\in A_*V
\]
(for more on relative characteristic classes see \cite{brasselet2010hirzebruch,bussi2014naive}).

Now let $X$ be a smooth compact base variety and let $\varphi:Z_X(n,d)\to X$ be the projecto-relative hypersurface defined in \S\ref{rv}, namely, the projecto-relative hypersurface given by
\[
Z_X(n,d):(x_1^d+\cdots+x_n^d+fx_1x_0^{d-1}+gx_0^d=0)\subset \mathbb{P}(\mathscr{E}),
\]
where $\mathscr{E}\to X$ is the direct sum of the trivial line bundle $\mathscr{O}\to X$ with $n$-copies of an ample line bundle $\mathscr{L}\to X$. For ease of notation we denote $Z_X(n,d)$ simply by $Z_X$ from here on. By equations (\ref{e9}) we have
\[
c(\varphi)=c_*\left(\varphi_*\mathbbm{1}_{Z_X}\right).
\]
We now focus on the computation of $\varphi_*\mathbbm{1}_{Z_X}$. Recall that the discriminant of $Z_X$ is given by 
\[
\Delta_d:((d-1)^{d-1}f^d+d(-d)^{d-1}g^{d-1}=0)\subset X,
\]
and for $Z_X$ to be smooth we make the assumption that the hypersurfaces
\[
F:(f=0)\subset X \quad \text{and} \quad G:(g=0)\subset X
\]
are smooth and intersect transversally. This further implies that the codimension 2 complete intersection
\[
C:(f=g=0)\subset X
\]
is smooth as well. We can then stratify $X$ according to the fibers of $\varphi:Z_X\to X$. The fibers of $\varphi$ over $X\setminus \Delta_d$ are smooth degree $d$ hypersurfaces whose homeomorphism class we denote by $F_0$, the fibers over $\Delta_d\setminus C$ are singular fibers whose homeomorphism class we denote by $F_1$, and the fibers over $C$ are singular fibers whose homeomorphism class we denote by $F_2$. By the definition of pushforward of constructible functions given by equation (\ref{pffcf}) we  have
\begin{eqnarray*}
\varphi_*\mathbbm{1}_{Z_X}&=&\chi(F_0)(\mathbbm{1}_X-\mathbbm{1}_{\Delta_d})+\chi(F_1)(\mathbbm{1}_{\Delta_d}-\mathbbm{1}_C)+\chi(F_2)\mathbbm{1}_C \\
                                        &=&\chi(F_0)\mathbbm{1}_X+(\chi(F_1)-\chi(F_0))\mathbbm{1}_{\Delta_d}+(\chi(F_2)-\chi(F_1))\mathbbm{1}_C,  \\
\end{eqnarray*}
so that
\begin{eqnarray*}
c(\varphi)&=&c_*\circ \varphi_*(\mathbbm{1}_{Z_X}) \\
             &=&c_*(\chi(F_0)\mathbbm{1}_X+(\chi(F_1)-\chi(F_0))\mathbbm{1}_{\Delta_d}+(\chi(F_2)-\chi(F_1))\mathbbm{1}_C) \\
             &=&\chi(F_0)c_{\text{SM}}(X)+(\chi(F_1)-\chi(F_0))c_{\text{SM}}(\Delta_d)+(\chi(F_2)-\chi(F_1))c_{\text{SM}}(C). \\
\end{eqnarray*}
Now since $X$ and $C$ are smooth, their CSM-classes coincide with their usual Chern classes $c(X)$ and $c(C)$, which are given by
\begin{equation}\label{e10}
c(X)=c(TX)\cap [X] \quad \text{and} \quad c(C)=c(TX)\cap \left(\frac{[F]\cdot [G]}{(1+[F])(1+[G])}\right).
\end{equation}
The details of the computation of $c_{\text{SM}}(\Delta_d)$ would take us too far afield (i.e., to the theory of Segre classes of singular schemes of hypersurfaces), but $c_{\text{SM}}(\Delta_d)$ was computed in \cite[\S4.2]{AE1} for the case $d=3$, and the computation for general $d$ is no different. As such, we have\small
\begin{equation}\label{e11}
c_{\text{SM}}(\Delta_d)=c(TX)\cap \left(\frac{[\Delta_d]}{1+[\Delta_d]}+\frac{(d-2)(d-1)[F]\cdot [G]}{(1+[\Delta_d])(1+[\Delta_d]+(1-d)[F])(1+[\Delta_d]+(2-d)[G])}\right),
\end{equation}\normalsize
thus we have an explicit formula for $c(\varphi)$ once we know the Euler characteristics of $F_0$, $F_1$ and $F_2$. The Euler characteristic of a smooth hypersurface of degree $d$ in $\mathbb{P}^n$ is a polynomial in $d$, namely
\begin{equation}\label{epf}
\mathfrak{e}_n(d)=-\sum_{k=0}^{n-1}\binom{n+1}{k}(-d)^{n-k},
\end{equation}
so that $\chi(F_0)=\mathfrak{e}_n(d)$. The singular fibers $F_1$ and $F_2$ both have an isolated singular point whose Milnor numbers are given by $(-1)^{n}(d-1)^{n-1}$ and $(-1)^n(d-1)^n$ respectively, thus their Euler characteristics are given by
\[
\chi(F_1)=\mathfrak{e}_n(d)+(-1)^{n}(d-1)^{n-1}, \quad \chi(F_2)=\mathfrak{e}_n(d)+(-1)^n(d-1)^n.
\]
We then have
\[
c(\varphi)=\mathfrak{e}_n(d)\cdot c(X)+(-1)^{n}(d-1)^{n-1}\cdot c_{\text{SM}}(\Delta_d)+(-1)^{n}((d-1)^n-(d-1)^{n-1})\cdot c(C),
\]
which after a lot of messy algebra yields
\begin{equation}\label{STMF1}
c(\varphi)=\frac{\mathfrak{e}_n(d)+(\mathfrak{e}_{n-1}(d)+1)dL }{1+dL}c(\text{id}_X),
\end{equation}
where $L$ denotes the first Chern class of the ample line bundle $\mathscr{L}\to X$ used in the definition of $Z_X(n,d)$. 

We note that even though $Z_X$ is smooth, we needed to use singularity theory to compute $c(\varphi)$ with this method, as it relied on the geometry of both the singular fibers of the fibration $\varphi:Z_X\to X$ and the geometry of its singular discriminant $\Delta_d$. In the next section, we use the pushforward formula of Theorem~\ref{pff} to bypass singularity theory in the computation of $c(\varphi)$. In particular, in using Theorem~\ref{pff}, we will show that all that is needed to compute the pushforward of $c(Y_X)$ for an arbitrary projecto-relative hypersurface $Y_X\to X$ are simple algebraic manipulations of rational expressions determined by its rational equivalence class in $A_*\mathbb{P}(\mathscr{E})$.

\section{Sethi-Vafa-Witten formulas}\label{SVW} 
We now use the pushforward formula of Theorem~\ref{pff} to compute $c(\varphi)$ for an arbitrary smooth projecto-relative hypersurface. So let $\varphi:Y_X\to X$ be a smooth projecto-relative hypersurface over a smooth compact base variety $X$, so that $Y_X$ is a hypersurface in a $\mathbb{P}^n$-bundle $\pi:\mathbb{P}(\mathscr{E})\to X$ for some rank $r$ vector bundle $\mathscr{E}\to X$. By the structure theorem for the Chow group of a projective bundle (cf.~\cite[Theorem~3.3]{IT}), the class of $Y_X$ in $A_*\mathbb{P}(\mathscr{E})$ is of the form $[Y_X]=dH+\pi^*\beta$, where $d$ is the degree of the equation defining $Y_X$, $\beta$ is a divisor in $X$, and $H$ is the class associated with the line bundle $\mathscr{O}(1)\to \mathbb{P}(\mathscr{E})$. We then associate with $Y_X$ a rational expression $Q_d(\mathscr{E},\beta)$ in the class $\beta\in A_*X$ and the non-trivial Chern roots $(L_1,\ldots, L_n)$ of $\mathscr{E}$, which for the moment we assume to be distinct. The rational expression $Q_d(\mathscr{E},\beta)$ -- which will be seen to arise from the application of Theorem \ref{pff} to the current setting in proof of Theorem \ref{rcc} below -- is then given by
\begin{equation}
Q_d(\mathscr{E},\beta)=\sum_{i=1}^{n}  \left(\frac{(1-L_i)\prod_{j=1}^n (1+L_j-L_i)}{{L_i \prod_{j=1, \; j\neq i}^n (L_i-L_j)}} \cdot \frac{\beta-d L_i}{1+\beta-d L_i}-\frac{\prod_{j=1}^n (1+L_j)}{L_i \prod_{j=1, \; j\neq i}^n (L_i-L_j)} \cdot \frac{\beta}{1+\beta}\right)\label{eq:Q_dEbeta}
\end{equation}
where we recall $n=$ is the number of non-trivial Chern roots of $\mathscr{E}$. If the non-trivial Chern roots are not all distinct, we may treat the $L_i$ as formal variables which may be evaluated at the non-distinct Chern roots after putting the expression for $Q_d(\mathscr{E},\beta)$ over a common denominator and canceling common factors from the numerator and denominator of the form $L_i \prod_{j=1, \; j\neq i}^r (L_i-L_j)$. In doing so, the expression for $Q_d(\mathscr{E},\beta)$ then takes the form
\begin{equation}\label{re}
Q_d(\mathscr{E},\beta)=\frac{P(\mathscr{E},\beta)}{(1+\beta)\cdot(1+\beta-dL_1)^{k_1+1}\cdots(1+\beta-dL_m)^{k_m+1}},
\end{equation}
where $L_1,...,L_m$ are the \emph{distinct} non-trivial Chern roots of $\mathscr{E}$, $k_i+1$ is the multiplicity of $L_i$, and $P(\mathscr{E},\beta)$ is a polynomial in the non-trivial Chern roots of $\mathscr{E}$ and $\beta \in A_*X$. {For an example of these see the discussion proceeding \eqref{eq:Qnd_BinomialForm}.}

We note that if $\mathscr{E}_d^{\vee}$ denotes a vector bundle whose distinct non-trivial Chern roots are $-dL_1,...,-dL_m$, where $-dL_i$ has the same multiplicity as $L_i$ (i.e.~$k_i+1$), then the denominator of $Q_d(\mathscr{E},\beta)$ as expressed in (\ref{re}) may be associated with the Chern class of the vector bundle $\mathscr{E}_d^{\vee}\otimes \mathscr{O}(\beta)$, so that $Q_d(\mathscr{E},\beta)$ may be written as
\[
Q_d(\mathscr{E},\beta)=c(\mathscr{E}_d^{\vee}\otimes \mathscr{O}(\beta))^{-1}\cap P(\mathscr{E},\beta)\in A_*X.
\] 
The Chern class of $\varphi$ is then given by the following
\begin{theorem}\label{rcc}
Let $Y_X\hookrightarrow \mathbb{P}(\mathscr{E})$ be a smooth projecto-relative hypersurface of degree $d$, and let $\varphi:Y_X\to X$ denote the associated projection. Then
\begin{equation}
c(\varphi)=Q_d(\mathscr{E},\beta)\cdot c(\emph{id}_X)\in A_*X.
\end{equation}
\end{theorem}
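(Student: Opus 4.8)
The plan is to reduce the whole computation to a single application of Theorem~\ref{pff}. Since $Y_X$ is smooth, \eqref{e9} together with the normalization \eqref{nc} gives $c(\varphi)=\varphi_*c(Y_X)$ with $c(Y_X)=c(TY_X)\cap[Y_X]$, and as the projection factors as $\varphi=\pi\circ\iota$ through the inclusion $\iota:Y_X\hookrightarrow\mathbb{P}(\mathscr{E})$ we have $c(\varphi)=\pi_*\bigl(\iota_*c(Y_X)\bigr)$, exactly as recorded at the end of \S\ref{fcc}. First I would compute $\iota_*c(Y_X)\in A_*\mathbb{P}(\mathscr{E})$ explicitly as a rational expression in $H$ with coefficients pulled back from $X$, and then feed that expression into Theorem~\ref{pff}.

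To obtain $\iota_*c(Y_X)$ I would combine adjunction with the Euler sequence of the projective bundle. The normal bundle of $Y_X$ is $\mathscr{O}(Y_X)|_{Y_X}$, of first Chern class $\iota^*(dH+\pi^*\beta)$, so $c(TY_X)=\iota^*c(T\mathbb{P}(\mathscr{E}))/\bigl(1+\iota^*(dH+\pi^*\beta)\bigr)$. Tensoring the tautological exact sequence $0\to\mathscr{O}(-1)\to\pi^*\mathscr{E}\to\mathscr{Q}\to0$ by $\mathscr{O}(1)$ identifies the relative tangent bundle $T_\pi$ with $\mathscr{Q}\otimes\mathscr{O}(1)$ and yields $c(T_\pi)=\prod_{j=1}^{n+1}(1+H+\pi^*L_j)$, whence $c(T\mathbb{P}(\mathscr{E}))=\pi^*c(TX)\cdot\prod_{j=1}^{n+1}(1+H+\pi^*L_j)$; taking the trivial Chern root to be $L_{n+1}=0$ this equals $\pi^*c(TX)\cdot(1+H)\prod_{j=1}^{n}(1+H+\pi^*L_j)$. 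Applying the projection formula for $\iota$ with $\iota_*[Y_X]=(dH+\pi^*\beta)\cap[\mathbb{P}(\mathscr{E})]$ then gives
\[
\iota_*c(Y_X)=\pi^*c(TX)\cdot\frac{(1+H)\prod_{j=1}^n(1+H+\pi^*L_j)\,(dH+\pi^*\beta)}{1+dH+\pi^*\beta}\cap[\mathbb{P}(\mathscr{E})].
\]
By the projection formula for $\pi$ the factor $\pi^*c(TX)$ pulls out of $\pi_*$, so it remains to prove $\pi_*\alpha=Q_d(\mathscr{E},\beta)$ for $\alpha=(1+H)\prod_{j=1}^n(1+H+L_j)(dH+\beta)/(1+dH+\beta)$; the identity $c(TX)\cap\bigl(Q_d(\mathscr{E},\beta)\cap[X]\bigr)=Q_d(\mathscr{E},\beta)\cdot c(\text{id}_X)$ then closes the argument.

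The substantive step, and the main obstacle, is the evaluation $\pi_*\alpha=Q_d(\mathscr{E},\beta)$ via Theorem~\ref{pff}. Since $\alpha$ is rational rather than polynomial, I would expand it as a formal power series in $H$, read off the coefficients $\alpha_j$, and verify that the truncation and the operator $D$ of Theorem~\ref{pff} assemble, after the interpolation identity~\eqref{spi}, into precisely the two summands of $Q_d(\mathscr{E},\beta)$. The substitution $H\mapsto-L_i$ produces the numerator $(1-L_i)\prod_{j=1}^n(1+L_j-L_i)$ and the factor $(\beta-dL_i)/(1+\beta-dL_i)$ of the first summand, while the subtracted truncation polynomial — which in Theorem~\ref{pff} encodes the contribution of the trivial Chern root $L_{n+1}=0$ — supplies the evaluation of $\alpha$ at $H=0$, namely $\prod_{j=1}^n(1+L_j)$ together with the factor $\beta/(1+\beta)$ of the second summand. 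The cleanest way to organize the bookkeeping is to recognize the resulting sum over $i=1,\dots,n$ as the divided difference of $\Phi(x):=\alpha(-x)$ over the nodes $\{0,L_1,\dots,L_n\}$: one has $\Phi(0)=\prod_j(1+L_j)\,\beta/(1+\beta)$, and
\[
Q_d(\mathscr{E},\beta)=\sum_{i=1}^n\frac{\Phi(L_i)-\Phi(0)}{L_i\prod_{j\neq i}(L_i-L_j)},
\]
which is manifestly regular at each $L_i=0$ and agrees with the output of Theorem~\ref{pff}. For the non-distinct case I would treat the $L_i$ as formal variables, put the sum over a common denominator so that the spurious factors $L_i\prod_{j\neq i}(L_i-L_j)$ cancel as in the discussion following Theorem~\ref{pff}, and only then specialize to the true multiplicities through $D$. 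Checking that this specialization reproduces the denominator $(1+\beta)\prod(1+\beta-dL_i)^{k_i+1}$ of~\eqref{re} is the one place where genuine care is needed; everything else is formal manipulation.
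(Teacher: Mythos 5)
Your argument is correct and follows the paper's proof essentially verbatim: the same three exact sequences produce the same class $\alpha(H)=(1+H)\prod_{j=1}^n(1+H+L_j)\,[Y_X]/(1+Y_X)$, the same projection-formula manipulations extract the factor $c(\mathrm{id}_X)$, and Theorem~\ref{pff} is applied to $\alpha(H)$ in exactly the same way, with the non-distinct case deferred to a final specialization just as in the paper. The only difference is packaging: your observation that the truncation polynomial collapses to the single value $\alpha(0)=\Phi(0)$, so that the answer is the divided difference of $\Phi$ over the nodes $\{0,L_1,\dots,L_n\}$, is precisely the paper's Lemma~\ref{L1}, which rests on the identity $\sum_{i=1}^m x_i^{p}/\prod_{j\neq i}(x_i-x_j)=0$ for $0\le p\le m-2$; you should state and invoke that identity explicitly rather than assert that the divided-difference form ``agrees with the output of Theorem~\ref{pff}.''
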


 Before proving Theorem~\ref{rcc}, we first prove the following
\begin{lemma}\label{L1}
Let $R$ be a commutative integral domain with unity and let $m\in \mathbb{N}$ be a positive integer. Then for any choice of $\alpha_0, \alpha_1, \ldots, \alpha_{m-1}\in R$ we have
\begin{equation}\label{SFE}
\sum_{i=1}^m  \frac{\alpha_0+ \alpha_1 x_i + \cdots + \alpha_{m-1}x_i^{m-1}}{x_i\prod_{j=1, \; j\neq i}^m (x_i-x_j)}=\sum_{i=1}^m \frac{\alpha_0}{x_i\prod_{j=1, \; j\neq i}^m (x_i-x_j)}\in R(x_1,...,x_m).
\end{equation}
\end{lemma}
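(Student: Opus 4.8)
The plan is to reduce \eqref{SFE} to a single classical vanishing identity and then dispatch that identity by a partial-fractions argument valid over an arbitrary commutative ring. First I would observe that the $\alpha_0$-term on the left-hand side of \eqref{SFE} is literally the right-hand side, so subtracting it reduces the claim to
\[
\sum_{i=1}^m \frac{\alpha_1 x_i+\cdots+\alpha_{m-1}x_i^{m-1}}{x_i\prod_{j=1,\,j\neq i}^m(x_i-x_j)}=0.
\]
Since the $\alpha_p$ are arbitrary elements of $R$ and the expression is $R$-linear in them, it suffices to treat each monomial separately; after cancelling the common factor $x_i$ this amounts to proving, for every exponent $q$ with $0\le q\le m-2$,
\[
\sum_{i=1}^m \frac{x_i^{q}}{\prod_{j=1,\,j\neq i}^m(x_i-x_j)}=0.
\]

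This is precisely the $j<0$ counterpart of the symmetric-function identity \eqref{spi} used in the proof of Theorem~\ref{pff}, and I would establish it directly from the partial-fraction decomposition
\[
\frac{1}{\prod_{i=1}^m(T-x_i)}=\sum_{i=1}^m\frac{1}{\prod_{j=1,\,j\neq i}^m(x_i-x_j)}\cdot\frac{1}{T-x_i},
\]
an identity of rational functions in an auxiliary variable $T$. Expanding each $\tfrac{1}{T-x_i}=\sum_{q\ge0}x_i^q T^{-q-1}$ and the left-hand side as $T^{-m}\prod_i(1-x_i/T)^{-1}=\sum_{s\ge0}h_s(x_1,\dots,x_m)\,T^{-m-s}$, where $h_s$ denotes the complete homogeneous symmetric polynomial of degree $s$, and then comparing the coefficient of $T^{-q-1}$ on both sides, one finds that $\sum_i x_i^{q}/\prod_{j\neq i}(x_i-x_j)$ equals $h_{q+1-m}$. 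For $0\le q\le m-2$ the index $q+1-m$ is negative and $h_{q+1-m}=0$, which is the desired vanishing; for $q=m-1$ one recovers $h_0=1$, that is, \eqref{spi} itself.

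The one point requiring care — and the only real obstacle — is that $R$ is an arbitrary commutative ring rather than a field, so both the manipulations above and the very meaning of $R(x_1,\dots,x_m)$ must be justified. Here I would note that $x_i$ and each difference $x_i-x_j$ are monic of degree one in a suitable variable and hence non-zero-divisors in $R[x_1,\dots,x_m]$, so the localization inverting them is injective and the partial-fraction identity is legitimate once cleared to a genuine polynomial identity in $R[x_1,\dots,x_m]$. Moreover every coefficient that occurs is an integer, so the statement is a universal identity: it is enough to verify it once over $R=\mathbb{Z}$ inside the fraction field $\mathbb{Q}(x_1,\dots,x_m)$, where the comparison of Laurent expansions in $T$ is unambiguous, and then transport it along the unique ring homomorphism $\mathbb{Z}\to R$. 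I expect the algebra to be routine; the substantive content is simply the vanishing of complete homogeneous symmetric polynomials of negative degree, packaged so as to hold over any $R$.
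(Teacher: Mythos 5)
Your proof is correct and follows the same reduction as the paper: peel off the $\alpha_0$ term and reduce, by $R$-linearity in the $\alpha_p$, to the vanishing of $\sum_{i=1}^m x_i^{q}/\prod_{j\neq i}(x_i-x_j)$ for $0\le q\le m-2$. The only difference is that the paper simply cites this vanishing identity from (i) of the proof of Theorem~3.2 of \cite{CI}, whereas you supply a self-contained derivation via the partial-fraction expansion of $1/\prod_{i}(T-x_i)$ and the vanishing of complete homogeneous symmetric polynomials of negative degree; your added care about non-zero-divisors and specialization from $\mathbb{Z}$ is sound and goes beyond what the paper records.
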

\begin{proof}
The left-hand side of equation (\ref{SFE}) may be expanded as
\[
\sum_{i=1}^m \alpha_0\frac{x_i^{-1}}{\prod_{j=1, \; j\neq i}^m (x_i-x_j)}+ \alpha_1\sum_{i=1}^m\frac{1} {\prod_{j=1, \; j\neq i}^m (x_i-x_j)}+ \cdots + \alpha_{m-1}\sum_{i=1}^m \frac{x_i^{m-2}}{\prod_{j=1, \; j\neq i}^m (x_i-x_j)},
\]
and in (i) of the proof of Theorem~3.2 of \cite{CI} it is shown that for  $0\leq p \leq m-2$ we have
\begin{equation}
    \sum_{i=1}^m \frac{x_i^{p}}{\prod_{j=1, \; j\neq i}^m (x_i-x_j)}=0,\label{eq:identityLem_6_2}
\end{equation}
from which the lemma follows. We note that there is in fact a typo in formula (i) of Theorem 3.2 of \cite{CI}, however we have corrected the typo in formula \eqref{eq:identityLem_6_2} above. In particular, in \cite{CI} they have incorrectly switched the bounds between the range values denoted $p,m$ in our notation.  
\end{proof}

\begin{proof}[Proof of Theorem~\ref{rcc}]
It suffices to prove the case where the non-trivial Chern roots of $\mathscr{E}$ are all distinct, since one may set them equal to each other after the final result if some of them appear with multiplicity. In such a case, we have $m=n$. Now let $\mathscr{O}(Y_X)\to \mathbb{P}(\mathscr{E})$ denote the line bundle corresponding to the divisor class of $Y_X$ (whose restriction to $Y_X$ yields the normal bundle to $Y_X$ in $\mathbb{P}(\mathscr{E})$). In order to write an explicit expression for $c(\varphi)$, we first write an expression for $c(\iota)$, where $\iota:Y\to \mathbb{P}(\mathscr{E})$ denotes the natural inclusion. For this, we consider the following exact sequences of vector bundles (as Chern classes are multiplicative) 
\[
0 \to TY_X \to \iota^*T\mathbb{P}(\mathscr{E}) \to \iota^*\mathscr{O}(Y_X) \to 0
\]
\[
0\to T_{\mathbb{P}(\mathscr{E})/X} \to T\mathbb{P}(\mathscr{E}) \to \pi^* TX \to 0
\]
\[
0 \to \mathscr{O} \to \pi^*(\mathscr{E}) \otimes \mathscr{O}(1) \to T_{\mathbb{P}(\mathscr{E})/X} \to 0.
\]
The first sequence (from the top) is standard, the second one may take as the definition of the relative tangent bundle $T_{\mathbb{P}(\mathscr{E})/X}$, and the third is given in B.5.8 of \cite{IT}. We then have
\begin{eqnarray*}
c(\iota)&=&\iota_*\left(c(TY_X)\cap [Y_X]\right) \\
        &=&\iota_*\left(\frac{c(\iota^*T\mathbb{P}(\mathscr{E}))}{c(\iota^*\mathscr{O}(Y_X))} \cap [Y_X]\right) \\
                       &=&\frac{c(\pi^*( \mathscr{E}) \otimes \mathscr{O}(1))\cdot c(\pi^*TX)}{c(\mathscr{O}(Y_X))} \cap \iota_*[Y_X] \\
                       &=&\frac{c(\pi^*( \mathscr{E}) \otimes \mathscr{O}(1))\cdot c(\pi^*TX)}{c(\mathscr{O}(Y_X))} \cap \left(c_1(\mathscr{O}(Y_X))\cap \pi^*[X]\right) \\
											 &=&\frac{c(\pi^*( \mathscr{E}) \otimes \mathscr{O}(1))\cdot c_1(\mathscr{O}(Y_X))}{c(\mathscr{O}(Y_X))} \cap \left(c(\pi^*TX)\cap \pi^*[X]\right) \\
											 &=&\frac{c(\pi^*( \mathscr{E}) \otimes \mathscr{O}(1))\cdot c_1(\mathscr{O}(Y_X))}{c(\mathscr{O}(Y_X))} \cap \pi^*c(\text{id}_X), \\
\end{eqnarray*}
where the third equality follows from the projection formula, the fourth from the fact that $\pi^*[X]=[\mathbb{P}(\mathscr{E})]$ (since $\iota_*[Y_X]=c_1(\mathscr{O}(Y_X))\cap [\mathbb{P}(\mathscr{E})]$), the fifth from the commutativity of Chern classes, and the sixth by the behavior of Chern classes with respect to pullback.

Now since $(0,L_1,...,L_m)$ are the Chern roots of $\mathscr{E}$ (which we recall we have assumed to be distinct), we have that the Chern roots of $\pi^*(\mathscr{E}) \otimes \mathscr{O}(1)$ are then $(H,L_1+H,\dots,L_m+H)$, (note that technically, the Chern roots are of the form $\pi^*L_i+H$ but we have omitted the pullbacks).
Thus 
\[
c(\iota)=\frac{(1+H)(1+H+L_1)\cdots (1+H+L_m)\cdot [Y_X]}{1+[Y_X]}\cdot \pi^*c(\text{id}_X),
\]
so that
\begin{eqnarray*}
c(\varphi)&=&\varphi_*c(\text{id}_{Y_X}) \\
                      &=&\pi_*\circ \iota_*c(\text{id}_{Y_X}) \\
											&=&\pi_*c(\iota) \\
											&=&\pi_*\left(\frac{(1+H)(1+H+L_1)\cdots (1+H+L_m)\cdot [Y_X]}{1+[Y_X]}\cdot \pi^*c(\text{id}_X)\right) \\
										  &=&\pi_*\left(\frac{(1+H)(1+H+L_1)\cdots (1+H+L_m)\cdot [Y_X]}{1+[Y_X]}\right)\cdot c(\text{id}_X), \\
\end{eqnarray*}
where the second equality follows from functoriality and the fifth follows via the projection formula. The theorem then follows once we show
\[
\pi_*\left(\frac{(1+H)(1+H+L_1)\cdots (1+H+L_m)\cdot [Y_X]}{1+[Y_X]}\right)=Q_d(\mathscr{E},\beta),
\] 
where we recall that the class of $Y_X$ is of the form $[Y_X]=dH+\pi^*\beta$ for some divisor $\beta$ in $X$. So now let  $\alpha(H)$ be given by
\[
\alpha(H)=\frac{\left(1+H\right)\cdot \prod_{i=1}^m\left(1+H+L_i \right)\cdot [Y_X]}{1+[Y_X]}.
\]
Applying Theorem~\ref{pff} along with Lemma~\ref{L1} to the class $\alpha(H)$ then yields 
\begin{eqnarray*}
\pi_*\alpha(H)&=&\sum_{i=1}^m \left(\frac{ \alpha(x_i)-\alpha_0}{x_i\prod_{j=1, \; j\neq i}^m (x_i-x_j)}\right)\bigg|_{x_1=-L_1,\dots,x_m=-L_m} \label{eq:QInProof} \\
           &=&{\sum_{i=1}^{m}  \left(\frac{(1-L_i)\prod_{j=1}^m (1+L_j-L_i)}{{L_i \prod_{j=1, \; j\neq i}^m (L_i-L_j)}} \cdot \frac{\beta-d L_i}{1+\beta-d L_i}-\frac{\prod_{j=1}^m (1+L_j)}{L_i \prod_{j=1, \; j\neq i}^m (L_i-L_j)} \cdot \frac{\beta}{1+\beta}\right)}\\
\end{eqnarray*}
 which coincides with $Q_d(\mathscr{E},\beta)$ by definition, thus concluding the proof.
\end{proof}

We note that for concrete realizations of $Y_X$, the associated pushforward formula and the form of $Q_d(\mathscr{E},\beta)$ will often be quite simple. In particular consider the case $Y_X=Z_X(n,d)$ as defined in \S\ref{rv} and let $Q_{n,d}(L)=Q_d(\mathscr{E},\beta)$ be the resulting expression in $L$, $n$ and $d$. First compute $Q_d(\mathscr{E},\beta)$ for $Y_X=Z_X(2,d)$ using Theorem \ref{pff}. We treat the repeated Chern root $L$ as two distinct Chern roots $L_1,L_2$. We now apply equation \eqref{eq:Q_dEbeta} with the two distinct Chern roots. The summation in the first term is:$$
\frac{\left({L}_{1}-1\right)\left({L}_{1}-{L}_{2}-1\right)\left(d{L}_{1}-\beta\right)(\beta+1)+{\beta}\left({L}_{2}+1\right)\left({L}_{1}+1\right)(dL_1-(\beta+1))}{L_1(L_1-L_2)(\beta+1)(dL_1-(\beta+1))},
$$ and the second term in the summation is:$$
\frac{\left({L}_{2}-1\right)\left({L}_{1}-{L}_{2}+1\right)\left(d{L}_{2}-\beta\right)(\beta+1)-{\beta}\left({L}_{2}+1\right)\left({L}_{1}+1\right)(dL_2-(\beta+1))}{L_2(L_1-L_2)(\beta+1)(dL_2-(\beta+1))}.
$$
Adding these two terms we see that each term in the resulting numerator has a factor of $(L_1L_2(L_1-L_2))$, this term also appears in the denominator. Canceling these from the numerator and denominator we obtain $Q_d(\mathscr{E},\beta)=\frac{P(L_1,L_2,d,\beta)}{(1+\beta)(dL_1-(1+\beta))(dL_2-(1+\beta))}$ where the numerator $P(L_1,L_2,d,\beta)$ is:
\tiny $$
{3L_1L_2d^2\beta+2L_1L_2d^2-3L_1d\beta^2-3L_2d\beta^2-L_1d^2-L_2d^2-4L_1d\beta-4L_2d\beta+3\beta^3-L_1d-L_2d-d^2+3d\beta+6\beta^2+3d+3\beta}.
$$\normalsize
Substituting in $L_1=L_2=L$ and $\beta=dL$ we obtain:$$Q_{2,d}(L)=
\frac{{L}d^{2}+{L}d-d^{2}+3\,d}{d{L}+1}=\frac{dL(d+1)+d(3-d)}{dL+1}.
$$Now consider the case $Y_X=Z_X(3,d)$; applying \eqref{eq:Q_dEbeta} with the three distinct Chern roots $L_1,L_1,L_3$, simplifying with a computer algebra system, and substituting in $L_1=L_2=L_3=L$ and $\beta=dL$ we obtain: $$Q_{3,d}(L)=
\frac{-L\,d^{3}+3\,L\,d^{2}+d^{3}+L\,d-4\,d^{2}+6\,d}{dL+1}=\frac{dL(-d^2+3d+1)+d(d^2-4d+6)}{dL+1}.
$$ By induction we see that \begin{equation}
 Q_{n,d}(L)=\frac{dL\left(1+ \displaystyle\sum_{i=1}^{n-1}(-1)^{i+1}{n \choose i+1}d^i \right)+d\left( \displaystyle\sum_{i=0}^{n-1}(-1)^i {n+1\choose i+2}d^i\right)}{1+dL}.\label{eq:Qnd_BinomialForm}   
\end{equation} 
In particular, for the case $Y_X=Z_X(n,d)$ as defined in \S\ref{rv}, we have
\[
Q_d(\mathscr{E},\beta)=\frac{\mathfrak{e}_n(d)+(\mathfrak{e}_{n-1}(d)+1)dL }{1+dL}.
\]
This reconfirms the derivation of \eqref{STMF1} using singularity theoretic methods.
We remark that using the methods of \S\ref{rccc} to compute $c(\varphi)$ will quickly become impractical as one considers more complicated examples since the singularities of its fibers and complexity of its discriminant will worsen significantly. In fact, in many such examples just computing the discriminant itself can be unfeasible, let alone its CSM class. On the other hand, computing $c(\varphi)$ via the pushforward formula remains the same, as it reduces the computation of $c(\varphi)$ to simple algebraic manipulations which are irrespective of the singularities of the fibers and discriminant of $\varphi:Y_X\to X$.  

Now let $Q_t$ be the polynomial in $t$ which comes from evaluating $Q_d(\mathscr{E},\beta)$ at $H=tH$, $\beta=t\beta$ and $L_i=tL_i$ for $i=1,...,n$, and denote by $c_t(X)$ the Chern polynomial of $X$, so that
\[
c_t(X)=1+c_1(X)t+c_2(X)t^2+\cdots +c_{\text{dim}(X)}(X)t^{\text{dim}(X)},
\]
and denote the map which takes a polynomial in $t$ to its $k$th coefficient by $[t^k]$. An immediate corollary of Theorem~\ref{rcc} is then given by the following.
\begin{corollary}\label{ecc}
 The Euler characteristic of $Y_X$ is given by the Sethi-Vafa-Witten formula
\[
\chi(Y_X)=\int_X [t^{\emph{dim}(X)}]\left(Q_t\cdot c_t(X)\right).
\]
\end{corollary}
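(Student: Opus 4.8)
The plan is to derive this directly from Theorem~\ref{rcc} and the relative Gauss--Bonnet--Chern formula (\ref{rgby}); the only substantive content is a bookkeeping argument identifying the formal parameter $t$ with the codimension grading on $A_*X$. First I would apply Theorem~\ref{rcc} to write $c(\varphi)=Q_d(\mathscr{E},\beta)\cdot c(\text{id}_X)$ in $A_*X$, and then invoke (\ref{rgby}), which (since $Y_X$ is smooth, so $c_{\text{SM}}(Y_X)=c(Y_X)$) gives $\chi(Y_X)=\int_X c(\varphi)$. Because the pushforward $\int_X\colon A_*X\to A_*\star=\mathbb{Z}$ annihilates every class of strictly positive dimension, the number $\int_X c(\varphi)$ depends only on the zero-dimensional, i.e.\ codimension-$\dim(X)$, component of the product $Q_d(\mathscr{E},\beta)\cdot c(\text{id}_X)$.

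Next I would make the grading explicit on each factor. Since $c(\text{id}_X)=c(TX)\cap[X]$, its codimension-$k$ piece is $c_k(X)$, so $c_t(X)$ is exactly the generating polynomial whose $t^k$-coefficient is the codimension-$k$ component of $c(\text{id}_X)$. Likewise $Q_d(\mathscr{E},\beta)$, once $\pi_*$ has eliminated $H$, is a rational expression in the codimension-one classes $\beta$ and $L_1,\dots,L_n$ that expands in $A_*X$ as a finite sum $Q_d(\mathscr{E},\beta)=\sum_j q_j$ of homogeneous pieces, with $q_j$ in codimension $j$; rescaling $\beta\mapsto t\beta$ and $L_i\mapsto tL_i$ then produces precisely the polynomial $Q_t=\sum_j q_j t^j$. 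Multiplying the two generating polynomials and using that the intersection product of a codimension-$j$ class with a codimension-$k$ class lands in codimension $j+k$, I obtain
\[
[t^{\dim(X)}]\bigl(Q_t\cdot c_t(X)\bigr)=\sum_{j+k=\dim(X)}q_j\cdot c_k(X),
\]
which is exactly the codimension-$\dim(X)$ component of $Q_d(\mathscr{E},\beta)\cdot c(\text{id}_X)=c(\varphi)$.

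Combining the two steps gives $\int_X[t^{\dim(X)}](Q_t\cdot c_t(X))=\int_X c(\varphi)=\chi(Y_X)$, which is the claim. The one point I would state most carefully --- and the only place where anything can go wrong --- is the grading dictionary: one must confirm that the formal rescaling by $t$ is compatible with the grading of the intersection ring $A_*X$, so that extracting $[t^{\dim(X)}]$ genuinely isolates the dimension-zero part that $\int_X$ detects. Beyond this alignment of gradings there is no analytic or geometric difficulty, consistent with the statement being an immediate corollary of Theorem~\ref{rcc}.
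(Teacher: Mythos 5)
Your proposal is correct and takes essentially the same route as the paper: both combine Theorem~\ref{rcc} with the relative Gauss--Bonnet formula $\chi(Y_X)=\int_X c(\varphi)$ and observe that the $t$-rescaling turns $Q_d(\mathscr{E},\beta)\cdot c(\mathrm{id}_X)$ into the generating polynomial $Q_t\cdot c_t(X)$ whose $t^{\dim(X)}$-coefficient is the dimension-zero part detected by $\int_X$. The paper states the graded decomposition $c(\varphi)=\sum_k[t^k](Q_t\cdot c_t(X))$ without comment, whereas you spell out the grading dictionary; the content is identical.
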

\begin{proof}
Rewriting the relative Gauss-Bonnet formula \eqref{svwf} using the notations of \S\ref{rccc} yields
\[
\chi(Y_X)=\int_X c(\varphi),
\]
and since
\[
c(\varphi)=\sum_{k=0}^{\text{dim}(X)}[t^k]\left(Q_t\cdot c_t(X)\right),
\]
the result follows from the definition of the map $\int_X:A_*X\to \mathbb{Z}$.
\end{proof}
We now apply Corollary \ref{ecc} to $Y_X=Z_X(n,d)$ as defined in \S\ref{rv}. Set $m=\dim(X)$, we have \small\begin{equation}
\chi(Z_X(n,d))=\mathfrak{e}_n(d)c_{m}(X)+(-dL)^{m}(\mathfrak{e}_n(d)-\mathfrak{e}_{n-1}(d)-1)+\sum_{i=1}^{m-1}(-dL)^{i}(\mathfrak{e}_n(d)-\mathfrak{e}_{n-1}(d)-1)c_{m-i}(X).
\end{equation}\normalsize For a second concrete illustration, we apply Corollary~\ref{ecc} to the elliptic fibration introduced in \S\ref{intro}.
\begin{example}
The Weierstrass equation
\[
y^2z=x^3+fxz^2+gz^3
\]
defines a projecto-relative hypersurface $\varphi:Y_X\to X$ by taking $x$ as a section of $\mathscr{O}(1)\otimes \pi^*\mathscr{L}^2$, $y$ a section of $\mathscr{O}(1)\otimes \pi^*\mathscr{L}^3$, $z$ a section of $\mathscr{O}(1)$, $f$ a section of $\pi^*\mathscr{L}^4$ and $g$ a section of $\pi^*\mathscr{L}^6$ (with $\mathscr{L}\to X$ an ample line bundle). In such a case, we have 
\[
\mathscr{E}=\mathscr{O}\oplus \mathscr{L}^2\oplus \mathscr{L}^3,
\]
and $[Y_X]=3H+6L$ (where $L$ is the Chern root of $\mathscr{L}\to X$), so that $\beta$ in this case is $6L$. The rational expression $Q_3(\mathscr{E},6L)$ is then given by
\[
Q_3(\mathscr{E},6L)=\frac{L-1}{L}+\frac{(1+2L)(1+3L)(6L)}{6L^2(1+6L)}=\frac{12L}{1+6L}.
\] 
Applying Corollary~\ref{ecc} and taking $\mathscr{L}=-K_X$ (so that $X$ must be Fano) then recovers the Sethi-Vafa-Witten formula
\begin{equation}\label{svwf1}
\chi(Y_X)=\int_X 12c_1(X)\sum_{i=0}^{\text{dim}(X)-1} c_i(X)\left(-6c_1(X)\right)^{\text{dim}(X)-1-i},
\end{equation}
which was first proved in \cite{AE1} by the singularity-theoretic methods used in the previous section. For general $\mathscr{L}$ we have the Sethi-Vafa-Witten formula
\begin{equation}\label{svwf2}
\chi(Y_X)=\int_X 12L\sum_{i=0}^{\text{dim}(X)-1} c_i(X)\left(-6L\right)^{\text{dim}(X)-1-i}.
\end{equation}
Moreover, since 
\[
c(\varphi)=\sum_{k=0}^{\text{dim}(X)}[t^k]\left(\frac{12Lt}{1+6Lt}\cdot c_t(X)\right),
\]
we have
\begin{equation}\label{fe}
c(\varphi)=\sum_{j=1}^{\text{dim}(X)}\left(12L\sum_{i=0}^{j-1} c_i(X)\left(-6L\right)^{j-1-i}\right),
\end{equation}
so that the entire relative Chern class may be obtained by truncating the Sethi-Vafa-Witten formula (\ref{svwf2}). In particular, in the case $L=c_1(X)$, \eqref{fe} yields formula \eqref{e4} from \S\ref{intro}.
\end{example}

\noindent
{\bf Acknowledgements.}
Martin Helmer was supported by an NSERC (Natural Sciences and Engineering Research Council of Canada) postdoctoral fellowship during the preparation of this work. We are grateful to the Institute of Mathematical Research at the University of Hong Kong for hosting Martin Helmer's visit there, where part of this work was completed.

\bibliographystyle{plain}
\bibliography{ERV2}

\end{document}